\newif\ifsiam
\newif\ifnummat
     \journalname{Computing and Visualization in Science}
\definecolor{dark}{gray}{0.6}
\definecolor{light}{gray}{0.8}
\def\eref#1{(\ref{#1})}
\def\Re{\mathbb R}
\def\ip#1#2{( #1 , #2 )}
\def\bigip#1#2{\big( #1 , #2 \big)}
\def\Bigip#1#2{\Big( #1 , #2 \Big)}
\def\bip#1#2{\langle #1 , #2 \rangle}
\def\bigbip#1#2{\big\langle #1 , #2 \big\rangle}
\def\norm#1{|\!| #1 |\!|}
\def\bignorm#1{\big|\!\big| #1 \big|\!\big|}
\def\snorm#1{| #1 |}
\def\enorm#1{|\!|\!| #1 |\!|\!|}
\def\rebAuthor{Randolph E. Bank}
\def\mmAuthor{Maximillan S. Metti}
    \def\rebShortAuthor{R.~E.~Bank}
    \def\mmShortAuthor{M.~S.~Metti}
    \def\rebShortAuthor{Bank}
    \def\mmShortAuthor{Metti}
\def\rebAddress{Department of Mathematics, University of California, San Diego,
 La Jolla, California 92093-0112. Email: rbank@ucsd.edu.}
\def\mmAddress{Department of Mathematics, University of California, San Diego,
 La Jolla, California 92093-0112. Email: mmetti@ucsd.edu.}
\def\rebThanks{The work of this author was supported by the National
Science Foundation under  contract DMS-1318480.}
\def\mmThanks{The work of this author was supported by the National
Science Foundation under  contract DMS-1318480.}
\title{An Error Analysis of Some Higher Order Space-Time Moving Finite Elements}
\def\shortTitle{Higher Order Moving Finite Elements}
\def\myKeywords{Moving Finite Elements, Error Analysis, Symmetric Error Estimate, Convection-Dominated}
\def\myAMS{65M15, 65M50, 65M60}
\def\myAbstract{
This is a study of certain finite element methods designed for convection-dominated, time-dependent partial differential equations.
Specifically, we analyze high order 
space-time tensor product finite element discretizations,
used in a method of lines approach coupled with mesh modification
to solve linear partial  differential equations. Mesh modification can be both continuous
(moving meshes) and discrete (static rezone).
These methods can lead to significant savings in computation costs for problems having solutions that develop steep moving fronts or
other localized time-dependent features of interest.
Our main result is a symmetric a priori error estimate for the finite element solution computed in this setting.
}
\begin{document}


\ifsiam
  \author{\rebAuthor%
         \thanks{\rebAddress\ \rebThanks}
          \and
          \mmAuthor%
         \thanks{\mmAddress\ \mmThanks}
         }
  \maketitle

  \begin{abstract}\myAbstract\end{abstract}
  \begin{keywords}\myKeywords\end{keywords}
  \begin{AMS}\myAMS\end{AMS}
  \pagestyle{myheadings}
  \thispagestyle{plain}
  \markboth{\rebShortAuthor\ and \mmShortAuthor }{\shortTitle}

\else


  \ifnummat
      \author{\rebAuthor%
             \thanks{\rebThanks}
             \and
             \mmAuthor%
             \thanks{\mmThanks}
              }
  \else
      \author{\rebAuthor%
             \thanks{\rebShortAuthor : \rebThanks}
             \and
             \mmAuthor%
             \thanks{\mmShortAuthor : \mmThanks}
              }
  \fi
  \institute{\rebShortAuthor : \rebAddress \\ \mmShortAuthor : \mmAddress}
  \date{Received: \today\  / Accepted: date}
  \maketitle
  \begin{abstract}\myAbstract\end{abstract}
  \begin{keywords}\myKeywords\end{keywords}
  \begin{subclass}\myAMS \end{subclass}
\fi



\section{Introduction}\label{sec1}

Computing accurate solutions to convection-dominated partial differential equations using standard finite element methods can be computationally expensive,
and sometimes prohibitively so.
Consequently, the use of adaptive methods can lead to great savings in computation time and maintain accuracy of the computed solution 
\cite{BABUSKADORR81,BANKNGUYEN}.
It is often the case that regions in which the solution to a partial differential equation is rough or rapidly changing are relatively small compared to the overall domain
and adaptive methods leverage this fact by focusing more computational effort by placing a higher concentration of the degrees of freedom in these regions
and avoiding ``over-solving'' where the solution is smooth \cite{BANKSHERMANWEISER}.
Effectively, adaptive methods are designed to automate the process of finding a finite element space that is well-suited to solving a given differential equation.
For time-dependent problems, these critical regions can move throughout the domain, as in the case of steep moving fronts,
and short time steps may be required to maintained a desired level of accuracy in these regions
\cite{HUNDSDORFERVERWER,MILLER1}.
In order to avoid these short time steps,  a moving mesh can be used to continuously track this moving region.

Moving finite elements were initially proposed by Miller and Miller in \cite{MILLER1,MILLER2}
and have been analyzed and implemented in the context of linear and nonlinear problems.
One of the main themes in moving finite element methods has been to devise strategies for effectively moving the mesh efficiently and accurately solve a given problem.
In this paper, however, we focus on providing an error analysis for a space-time moving finite element method that allows for a variety of mesh motion schemes.

The first error analysis of moving finite element methods is given in Dupont \cite{DUPONT82}, where a symmetric error bound of the form
\begin{equation}        \label{quasi-optimality}
        \enorm{u-u_h}   \le     C \inf_{v \in \mathcal{V}_h} \enorm{u-v},
\end{equation}
was proved in the semi discrete case (continuous in time, discrete in space).
Here $u$ and $u_h$ are the true solution and the finite element solution to the differential equation, respectively, $\mathcal{V}_h$ is the finite element space,
and $\enorm{\cdot}$ is a specially defined mesh dependent energy norm related to the differential equation.
Symmetric error bounds are proven for linear moving finite elements by Bank and Santos in \cite{BANKSANTOS,SANTOSTHESIS}
and by Dupont and Mogultay \cite{DUPONTMOGULTAY}.
Some symmetric error estimates for mixed methods that use moving meshes are proven in \cite{LIU_ETAL}.

Here we prove a symmetric error bound like \eref{quasi-optimality} for finite element spaces of arbitrary order,
and more general time integration schemes.
To do this, we first describe a space-time tensor-product finite element space that allows the spatial discretization to evolve continuously in time,
except at discrete time steps where the mesh is allowed to reconfigure in a discontinuous fashion.
Since this analysis employs higher order finite element spaces, say order $p$, 
the spatial nodes are permitted to follow piecewise polynomial trajectories of degree $p$ time, allowing smoother and more dynamic mesh motion.

In \S \ref{sec2}, the time-dependent linear convection-diffusion equation is introduced and the construction of a space-time tensor-product finite element space is described.
In \S \ref{sec3}, the notation for the analysis is established, a new space-time shape regularity constraint is proposed for the moving finite elements,
and some preliminary results are given.
A space-time moving finite element method is proposed and analyzed in \S \ref{sec4},
and a symmetric error estimate is proven for finite element spaces of arbitrary order.
We conclude in \S \ref{sec5} with some remarks on this error analysis and an error analysis for which more general time integration schemes can be employed.

\section{Notation and Definitions}\label{sec2}

The spatial domain $\Omega$ is assumed to be a compact subset of $\Re^d$,
where {$d=1,2,$ or $3$}, with boundary $\partial \Omega$.
The time domain is a finite interval, $(0, T]$, and the space-time 
domain is given by $\mathcal{F} \equiv \Omega \times (0, T]$.

Let $a$, $b$, $c$, and $f$ be smooth and bounded functions defined on $\mathcal{F}$
such that there exist constants $\bar{a}>0$ and $\bar{c}\ge0$ with $a \ge \bar{a}$ and $c \ge \bar{c}$,
and let $g$ be piecewise continuous on $\partial \Omega$.
Let $u_0$ be a given initial condition for the solution on $\mathcal{F}$ and let $n$ denote the outward unit normal vector to the boundary $\partial \Omega$.
The solution to the differential equation, denoted by $u$, is the function that satisfies 
\begin{align}
        u_t     -       \nabla \cdot ( a \nabla u )     +       b \cdot \nabla u        +       c u     &=      f,              &\mathrm{in}\ \mathcal{F},
                                                                                                                                                        \label{strong pde}\\
                                                                                a  \nabla u \cdot n     &=      g,              &\mathrm{on}\ \partial \Omega \times (0, T],
                                                                                                                                                        \label{strong bc}\\
                                                                                                u(x, 0) &=      u_0(x), &\mathrm{for}\ x\ \mathrm{in}\ \Omega.\nonumber
\end{align}
For convenience, a Neumann boundary condition \eref{strong bc} is assumed;
our results still hold with minor changes when other boundary conditions are imposed.

When the convection velocity, $b$ in \eref{strong pde}, is large, 
the solution of the equation may develop steep {shock layers} that sweep through the spatial domain.
Such moving structures can be difficult to track accurately and require small time steps for non-moving finite elements.
To offset the effects of a potentially large convection velocity,
we replace the time derivative in \eref{strong pde} with space-time directional derivative, as in the method of characteristics.
Define a time-dependent parameterization of the spatial variable, $x(t)$,
and the \emph{characteristic} derivative as
\[
        u_\tau(x(t),t)  \equiv  \frac{d}{dt} u(x(t),t)  =       u_t (x(t),t) + x_t \cdot \nabla u(x(t),t).
\]

We replace the time derivative with the characteristic derivative  in \eref{strong pde}
and propose a weak form of the differential equation that can reduce the presence of the convection velocity:
find $u$ with $u(t) \in \mathcal{H}^1(\Omega)$ and $u_t(t) \in \mathcal{L}_2(\Omega)$
such that for all $\chi$ in $\mathcal{H}^1(\Omega)$ and $0 < t \le T$,
\begin{align}
                        \bigip{ u_\tau(\cdot,t)}{\chi}  +       \mathcal{A}_\tau\ip{t;  u}{\chi}        &=      \bigip{f(\cdot,t)}{\chi}        +       \bigbip{g(\cdot,t)}{\chi},      
                                                                                                                                                                        \label{variational form diff eqn} \\
\intertext{and when $t=0$}
                                                                                \bigip{u(\cdot,0)}{\chi}        &=      \bigip{u_0}{\chi}.      \nonumber
\end{align}
The inner-products are given by
\begin{align*}
        \ip{f}{\chi}                                    &=      \int_{\Omega} f(x) \chi(x) \ dx,                        \\
        \bip{g}{\chi}                           &=      \int_{\partial \Omega} g(s) \chi(s)     \ ds,
\end{align*}
and define the time-dependent bilinear form
\begin{multline*}
        \mathcal{A}_\tau\ip{t; u}{\chi} \equiv  \int_{\Omega}   a(x,t) \nabla u(x,t) \cdot \nabla \chi(x)       +       (b(x,t)-x_t(t)) \cdot \nabla u(x,t) \ \chi(x)   \\
                                                                +       c(x,t) u(x,t) \chi(x)   \ dx.
\end{multline*}
Notice that parameterizing the spatial variable so that $x_t \approx b$ leads to a formulation where the convection velocity is much less prominent,
as it is ``absorbed'' into the characteristic derivative.

\subsection{A space-time tensor-product moving finite element space}

To compute a solution to the differential equation,
we restrict the trial and test spaces of equation \eref{variational form diff eqn} to a finite element space, $\mathcal{V}_h^p$,
where $p\ge1$ is the maximum order of the piecewise polynomials that generate the space.
The finite element spaces described in this paper are tensor-products of finite element spaces on $\Omega$
with a finite element discretization of the time domain.
These are akin to the finite element spaces described in \cite{BANKSANTOS,DUPONTMOGULTAY,SANTOSTHESIS},
except that these previous works restricted attention to the case of linear elements ($p=1$).

Partition the time domain into $m$ disjoint intervals, where the endpoints of the partitions are given by $\{t_i\}$ and satisfy
\[
        0       =       t_0     <       t_1     <       \ldots  <       t_m     =       T,
\]
with $\Delta t_i \equiv t_i - t_{i-1}$ for $i = 1, \ldots, m$.
Within each time partition, we define a triangulation of the spatial domain $\Omega$ with nodes
$\{ x_k(t) \}_{k=0}^{n_i}$
such that each node is a polynomial of degree $p$ on $( t_{i-1}, t_i ]$.
To avoid mesh degeneration, the mesh topology is required not to change in time
--- when $d=1$, this corresponds to $x_{k-1}(t) < x_k(t)$ for $t_{i-1} \le t \le t_i$ and $k=1,\ldots, n_i$.
The resulting mesh is a series of time partitions on which we have a triangulation of the mesh that evolves continuously in time.
An example time partition, with quadratic mesh motion ($p=2$) and $d=1$, is depicted in figure \ref{fig: space time partition}.

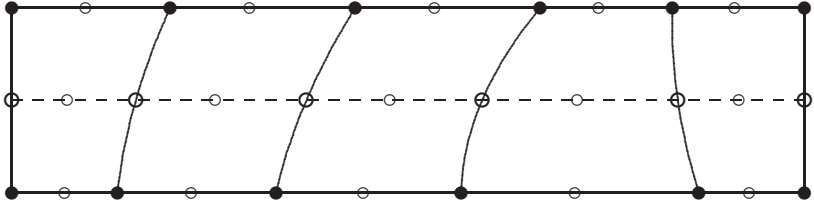
\begin{figure}[h!btp] 
        \begin{picture}(300,100)(-35,0)
                \put(0,0){\thicklines{\line(0,1){70}}}
                \put(0,0){\thicklines{\line(1,0){300}}}
                \put(0,35){\dashbox{4}(300,0){}}
                \put(0,70){\thicklines{\line(1,0){300}}}
                \put(300,0){\thicklines{\line(0,1){70}}}
                \put(0,0){\thicklines{\circle*{5}}}
                \put(300,70){\thicklines{\circle*{5}}}
                \put(300,0){\thicklines{\circle*{5}}}
                \put(0,70){\thicklines{\circle*{5}}}
                \put(250,70){\thicklines{\circle*{5}}}
                \put(200,70){\thicklines{\circle*{5}}}
                \put(130,70){\thicklines{\circle*{5}}}
                \put(60,70){\thicklines{\circle*{5}}}
                \put(40,0){\thicklines{\circle*{5}}}
                \put(100,0){\thicklines{\circle*{5}}}
                \put(170,0){\thicklines{\circle*{5}}}
                \put(260,0){\thicklines{\circle*{5}}}
                \put(47,35){\thicklines{\circle{5}}}
                \put(111.5,35){\thicklines{\circle{5}}}
                \put(178,35){\thicklines{\circle{5}}}
                \put(252,35){\thicklines{\circle{5}}}
                \put(0,35){\thicklines{\circle{5}}}
                \put(300,35){\thicklines{\circle{5}}}
                \put(20,0){\circle{4}}
                \put(68,0){\circle{4}}
                \put(133,0){\circle{4}}
                \put(213,0){\circle{4}}
                \put(279,0){\circle{4}}
                \put(21,35){\circle{4}}
                \put(77,35){\circle{4}}
                \put(143,35){\circle{4}}
                \put(214,35){\circle{4}}
                \put(275,35){\circle{4}}
                \put(28,70){\circle{4}}
                \put(90,70){\circle{4}}
                \put(160,70){\circle{4}}
                \put(222,70){\circle{4}}
                \put(273.5,70){\circle{4}}
                \qbezier(40,0)(44,35)(60,70)
                \qbezier(100,0)(108,35)(130,70)
                \qbezier(170,0)(171,35)(200,70)
                \qbezier(260,0)(249,35)(250,70)
        \end{picture}
\caption{An example space-time mesh partition with $d=1$ and $p=2$.
The filled circles represent the space-time ``hat'' basis functions;
hollow circles correspond to basis functions that are the product of a ``bump'' function with a ``hat'' or
``bump'' function.}
\label{fig: space time partition}
\end{figure}


As described above, a continuously moving mesh is defined on each time partition.
Between time partitions, however, discontinuous changes in the mesh are permitted.
These discontinuities provide for the periodic addition, deletion, and relocation of the spatial nodes to track structures that may develop or vanish over time.
Another important benefit of these mesh reconfigurations is that they provide a means to avoid the nodes from entangling or colliding
\cite{CARLSONMILLER1,CARLSONMILLER2,KUPRATTHESIS}.

For $d=1$, the elements in the mesh are curvilinear trapezoids with flat parallel edges corresponding to the beginning and end of the time partitions,
and curved edges representing the moving space nodes --- recall that these edges are polynomial curves of degree $p$.
The associated space-time reference element is the Cartesian product of the reference element in space with the reference element in time.
Moreover, the space-time basis functions are given by the tensor products of the degree $p$ polynomial spatial basis functions on $[0,1]$
and the degree $p$ polynomial temporal basis functions on $[0,1]$.
This implies that there are $(p+1)^2$ degrees of freedom associated with the reference element.
Since a tensor product is used to define the space-time basis functions,
the degrees of freedom on the reference element are aligned into time slices.
The degrees of freedom are represented by the filled and empty circles in figure {\ref{fig: space time partition}} and their alignment
into time slices is emphasized by the dashed line.

When $d=2$, the elements are curvilinear triangular prisms
with flat triangular parallel edges corresponding to the beginning and end of a time partition,
and the curved edges corresponding to the space nodes moving along polynomial trajectories of degree $p$.
The reference element is the Cartesian product of the unit triangle and the unit interval, giving a wedge-like shape,
and has $(p+1)\times(p+1)(p+2)/2$ degrees of freedom.
The space-time basis functions are the tensor product of the degree $p$ polynomial basis functions in space, on the unit triangle,
and the degree $p$ polynomial basis functions in time on the unit interval.

For $d=3$, the space-time reference element is the Cartesian product of the unit tetrahedron with the unit interval reference element for time
and has $(p+1)\times(p+1)(p+2)(p+3)/6$ degrees of freedom.

Isoparametric maps are used
 to map the degrees of freedom from the reference element to elements in the mesh on $\mathcal{F}$.
We start with the case $d=1$ and then generalize to higher dimensions.
Let $e$ be an element in the $i^{\mathrm{th}}$ time partition $(t_{i-1},t_i]$ given by $e(t) = [ x_{k-1}(t), x_k(t) ]$,
where $x_{k-1}(t)$ and $x_k(t)$ are polynomials of degree $p$ satisfying $x_{k-1}(t) < x_k(t)$.
The time component of the isoparametric map is given by the affine map
\[
        t       =       (1-\hat{t}) t_{i-1} + \hat{t} \, t_i    =       t_{i-1} + \hat{t} \Delta t_i
\]
for $0 \le \hat{t} \le 1$.
Taking $\Delta x_k(t) = x_k(t) - x_{k-1}(t)$, the time dependent isoparametric map for the element $e(t)$ is
\[
        x(t)    =       (1-\hat{x}) x_{k-1}(t) + \hat{x} \, x_k(t)      =       x_{k-1}(t) + \hat{x} \Delta x_k(t),
\]
with $0 \le \hat{x} \le 1$.

The inverse of the isoparametric map for element $e$ is 
\begin{align*}
        \hat{t} &=      \frac{t-t_{i-1}}{\Delta t_i}    \\
\intertext{and}
        \hat{x}(t)      &=      \frac{x-x_{k-1}(t)}{\Delta x_k(t)},
\end{align*}
and exists whenever $\Delta x_k(t)>0$ for $t$ in $[t_{i-1},t_i]$ and $x$ in $e(t) = [x_{k-1}(t),x_k(t)]$.
Since the inverse of the spatial component is an affine transformation in space,
it holds that the finite element space at any 
fixed time $t$ is a standard 
finite element space of continuous piecewise polynomials of degree $p$ defined over $\Omega$.
We denote this slice of the finite element space by $\mathcal{V}_h^p(t)$.
This property of the finite element space justifies analyzing finite element functions on a time slice --- for example, $\phi(t)$ in $\mathcal{V}_h^p(t)$ ---
in a way that is consistent with the study of finite element methods for autonomous problems.
Note that $\Delta t_i > 0$ and $\Delta x_k(t) > 0$ are always required for the finite element space to be well-defined.

When $d=2$ or $3$, the time component of the isoparametric map is unaffected, but the spatial component
is now given by a vector mapping that is affine in space and a polynomial of degree $p$ in time.
Taking the derivative of the isoparametric map, one obtains the block triangular Jacobian matrix
\[
        J_e(t)  =       \begin{bmatrix} \mathcal{J}_e (t)       &       \Delta t_{i} x_t(t)^T           \\
                                                                        0               &       \Delta t_i 
                                                                                                         \end{bmatrix},
\]
where $\mathcal{J}_e(t)$ represents the $d \times d$ Jacobian matrix of isoparametric map of element $e(t)$,
and $x_t(t)$ is a $d$-vector that is affine in space and a polynomial of degree $p-1$ along the node trajectories $x_k(t)$.
The trajectories traced out by $x(t)$ are called the \emph{characteristic} trajectories of the mesh and
the vector $x_t$ describes the mesh motion of the finite element space.
These characteristic trajectories can be chosen to offset the convection velocity,
by setting $x_t \approx b$, in hopes of attaining greater flexibility in the length largest permissible time step.
As in the case of one spatial dimension, we require $\Delta t_i>0$ and for the spatial mesh
\[
        \mathcal{D}_e(t)        \equiv  | \det( \mathcal{J}_e(t) ) | > 0.
\]
Note that $\mathcal{D}_e(t)$ is proportional to the size of the spatial element $e(t)$,
so this constraint prevents the element from degenerating in time.

As mentioned above, each time slice of the finite element space is a standard finite element discretization on $\Omega$.
Thus, for $\phi$ in $\mathcal{V}_h^p$ we have $\phi(t) \in \mathcal{V}_h^p(t) \subset \mathcal{H}^1(\Omega)$.
Along the characteristic trajectories, the finite element function $\phi$ is a piecewise polynomial of degree $p$,
defined on the partition given by $\{ t_i \}_{i=0}^m$.
That is, the function $\phi(x(t),t)$ is a polynomial of degree $p$ for $t_{i-1} < t \le t_i$ and fixed $x$ in $\Omega$.
Recall that the finite element functions can have discontinuities between the mesh partitions;
define the jump of $\phi$ by
\[
        [\phi](t_i)     =       \lim_{\delta \rightarrow 0^+} \phi(t_i+\delta) - \phi(t_i-\delta)               \equiv  \phi(t_{i^+}) - \phi(t_{i^-}).
\]
To uniquely define the finite element functions at these discontinuities, following Dupont \cite{DUPONT82},
we require the jump to be orthogonal to the finite element space at the beginning of the new partition:
\begin{equation}        \label{jump orthogonality}
        \bigip{ [ \phi ](t_i) }{\chi}   =       0,
\end{equation}
for all $\chi$ in $\mathcal{V}_h^p(t_{i^+})$.

\section{Preliminary Results}\label{sec3}

Multi-index notation is used to represent spatial derivatives, 
but time and characteristic derivatives will not follow this convention.
The $\mathcal{H}^k(\Omega)$ semi-norm and norm follow conventional notation and we write
\begin{align*}
        \snorm{v}_k     &=      \left(\sum_{|\alpha| = k} \ip{ D_{\alpha} v }{ D_{\alpha} v } \right)^{1/2}     \\
\intertext{and}
        \norm{v}_k      &=      \left(\sum_{|\alpha| \le k} \ip{ D_{\alpha} v }{ D_{\alpha} v } \right)^{1/2}.
\end{align*}
Following Dupont \cite{DUPONT82}, a mesh-dependent semi-norm is defined that allows us to prove our symmetric error estimate,
\[
        \norm{v}_{(-1,\mathcal{V}_h^p(t))}      =       \sup_{\substack{ \chi \in \mathcal{V}_h^p(t) \\ \chi \neq 0}}   \frac{ \left| \ip{ v }{\chi} \right|}{ \norm{\chi}_1}.
\]
We also use the infinity norm, $\norm{v}_{\infty}       =       \max_{x\in\Omega} \snorm{v(x)}$.

Let $\mathcal{Q}_{\mathrm{ref}}$ represent a \emph{reference} quadrature rule, which is defined on the time reference element $[0,1]$
as the interpolatory quadrature rule with knots at a given set of time collocation nodes $\{ \hat{t}_j \}_{j=1}^p$.
It is required that the knots of the $\mathcal{Q}_{\mathrm{ref}}$ satisfy
\begin{equation}        \label{unique collocation}
        0       <       \hat{t}_1 < \ldots < \hat{t}_p  \le     1,
\end{equation}
and that the weights $\{w_j\}$ are all positive for $j=1,\ldots,p$.
For convenience, let $\hat{t}_0 = 0$ be coincident with a degree of freedom on the reference element.
Using the isoparametric maps, this quadrature rule can be applied to the time partitions:
\[
        \mathcal{Q}_i( v )      \equiv  \sum_{j=1}^p w_j v ( t_{i-1} + \hat{t}_j \Delta t_i )
                                        \approx \frac{1}{\Delta t_i} \int_{t_{i-1}}^{t_i} v(t)\ dt.
\]

We now introduce a space-time shape regularity constraint for the moving finite elements,
that controls the time evolution of the spatial elements 
and prevents degenerate elements.
Fix $e$ to be an element in the time partition with $t_{i-1} \le t \le t_i$.
Then, the Jacobian matrix at time $t$ can be represented as
\begin{equation}        \label{jacobian evolution}
        \mathcal{J}_e(t)        =       \big( \mathcal{R}_{e}(t) + \Delta t_i \mathcal{H}_e(t) \big) \mathcal{J}_e( t_{i-1^+} ),
\end{equation}
for some orthogonal \emph{rotation} matrix, $\mathcal{R}_{e}(t)$, and \emph{evolution} matrix, $\mathcal{H}_e(t)$.
The matrix $\mathcal{R}_{e}+\Delta t_i \mathcal{H}_e$ is constrained to have polynomial entries of degree at most $p$ throughout the time partition.
 As the name suggests, the matrix $\mathcal{R}_{e}(t)$ describes the element rotation in time,
 and the evolution matrix describes the deformation of the shape of the element.
Since the trajectories of the spatial nodes are restricted to polynomial paths of degree $p$,
elements do not rotate perfectly and more of a twisting action is observed;
the evolution matrix necessarily reflects these deformations.
If an element is merely translated in time, without rotation or changing shape,
 then the Jacobian matrix, $\mathcal{J}_e(t)$, remains unchanged.

Let $\rho(\cdot)$ represent the spectral radius for $d \times d$ matrices.
For space-time regularity, it is assumed that the evolution matrix $\mathcal{H}_e$ has a uniformly bounded spectral radius throughout the time step;
namely, there exists some positive constant $\mu$ that does not depend on $e$ or $t$ such that
\begin{equation}        \label{space time regularity}
        \rho \big( \mathcal{H}_e(t) \big)       \le \mu.
\end{equation}
This can be interpreted as bounding the relative change in shape and size of the element over time.

Assuming a non-degenerate finite element space and the space-time shape regularity bound \eref{space time regularity}, it follows that
\begin{equation}        \label{spectral radius bound}
        \rho \big( \mathcal{J}_e(t) \mathcal{J}_e^{-1}(t_{i-1^+}) \big) =       \rho \big( \mathcal{R}_{e}(t) + \Delta t_i \mathcal{H}_e(t) \big)
                                                                                        \le     1 + \mu \Delta t_i
\end{equation}
and, for $\tilde{c}_{\mu,d} = [ (1+\mu \Delta t_i)^d - 1] / \Delta t_i = \mathcal{O}(1)$ and $\Delta t_i \le 1/2\tilde{c}_{\mu,d}$,
\begin{multline}        \label{determinant bound}
        (1-\tilde{c}_{\mu,d}\Delta t_i) \le     (1- \mu \Delta t_i)^d   \le     \frac{\mathcal{D}_e(t)}{\mathcal{D}_e(t_{i-1^+})}               \\
                        =       \det \big(\mathcal{J}_e(t) \mathcal{J}_e^{-1}(t_{i-1^+}) \big)  \le     (1 + \mu \Delta t_i)^d  \le     (1+ \tilde{c}_{\mu,d}\Delta t_i),
\end{multline}
since $\mathcal{R}_{e}$ is an orthogonal matrix.

Let $\phi$ be a function in the finite element space $\mathcal{V}_h^p(t)$ for some $t$ in the time partition $(t_{i-1},t_i]$.
We \emph{shift} $\phi$ onto the mesh of $\mathcal{V}_h^p(t_{i-1^+})$, at the beginning of the time partition by replacing
the basis functions of $\mathcal{V}_h^p(t)$ with with their corresponding basis functions in $\mathcal{V}_h^p(t_{i-1^+})$,
while preserving the basis coefficients.
Formally, this operation can be defined by an element-wise composition of the inverse of the affine spatial isoparametric maps for the elements in the mesh at time $t$,
which is well-defined for non-degenerate meshes,
with the affine spatial isoparametric maps for the elements at the beginning of the time step $t_{i-1^+}$.
The following lemma establishes the relationship between the space-time shape regularity constraint \eref{space time regularity}
and the continuity of this shift operation.

\begin{lemma}[Shift Lemma]      \label{shift lemma}
Let $\phi,\chi \in \mathcal{V}_h^p(t)$ and $\tilde{\phi},\tilde{\chi} \in \mathcal{V}_h^p(t_{i-1^+})$ represent a pair of finite element functions and their shifts, respectively,
on a non-degenerate time partition of the mesh that satisfies \eref{space time regularity} on each element.
If $\Delta t_i \le 1/2\tilde{c}_{\mu,d}$, as defined in \eref{determinant bound}, then there exists a positive constant $C_{\mu,d}$ such that
\begin{align}
        \Big| \bigip{\phi}{\chi} - \bigip{\tilde{\phi}}{\tilde{\chi}} \Big|     &\le    C_{\mu,d} \Delta t_i \frac{\bignorm{\tilde{\phi}}_0^2   +       \bignorm{\tilde{\chi}}_0^2}{2}, \label{IP1}\\
        \Big|   \bignorm{\phi}_0^2 - \bignorm{\tilde{\phi}}_0^2 \Big|   &\le    C_{\mu,d} \Delta t_i \bignorm{\tilde{\phi}}_0^2,                                                                                \label{L2 bound}\\
        \Big|   \bignorm{\phi}_1^2 - \bignorm{\tilde{\phi}}_1^2 \Big|   &\le    C_{\mu,d} \Delta t_i \bignorm{\tilde{\phi}}_1^2.                                                                                \label{H1 bound}
\end{align}
\end{lemma}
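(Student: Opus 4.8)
The plan is to reduce every estimate to an element-by-element change of variables on the reference element $\hat{e}$, to assemble the local bounds by summing over elements, and to symmetrize using Cauchy--Schwarz with the arithmetic--geometric mean inequality. The starting observation is that, since the shift preserves the basis coefficients, a function $\phi$ on the mesh at time $t$ and its shift $\tilde{\phi}$ on the mesh at $t_{i-1^+}$ have a common pullback $\hat{\phi}$ on $\hat{e}$; moreover the spatial isoparametric map is affine, so $\mathcal{J}_e(t)$ and $\mathcal{D}_e(t)=|\det\mathcal{J}_e(t)|$ are constant on each element and factor out of the element integrals. It is convenient to record the matrix $N\equiv\mathcal{J}_e(t)\mathcal{J}_e^{-1}(t_{i-1^+})=\mathcal{R}_e(t)+\Delta t_i\mathcal{H}_e(t)$ from \eref{jacobian evolution}, which is exactly the Jacobian of the affine shift map carrying $e(t_{i-1^+})$ to $e(t)$ (so the inverse shift has Jacobian $N^{-1}$).

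First I would dispatch \eref{IP1} and \eref{L2 bound}, where only the Jacobian determinant enters. Changing variables gives $\int_{e(t)}\phi\chi\,dx=\mathcal{D}_e(t)\int_{\hat{e}}\hat\phi\hat\chi\,d\hat{x}$ and $\int_{e(t_{i-1^+})}\tilde\phi\tilde\chi\,dx=\mathcal{D}_e(t_{i-1^+})\int_{\hat e}\hat\phi\hat\chi\,d\hat x$, so the per-element difference equals $\big(\mathcal{D}_e(t)/\mathcal{D}_e(t_{i-1^+})-1\big)\int_{e(t_{i-1^+})}\tilde\phi\tilde\chi\,dx$. By \eref{determinant bound} the scalar factor has modulus at most $\tilde{c}_{\mu,d}\Delta t_i$ once $\Delta t_i\le 1/2\tilde{c}_{\mu,d}$; bounding $\big|\int_{e(t_{i-1^+})}\tilde\phi\tilde\chi\big|$ by $\tfrac12\big(\bignorm{\tilde\phi}_{0,e}^2+\bignorm{\tilde\chi}_{0,e}^2\big)$ and summing over elements yields \eref{IP1} with $C_{\mu,d}=\tilde{c}_{\mu,d}$. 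Taking $\chi=\phi$, $\tilde\chi=\tilde\phi$ specializes \eref{IP1} directly to \eref{L2 bound}.

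The substance is \eref{H1 bound}, where the gradient does not transform by the determinant alone. Since the inverse shift has Jacobian $N^{-1}$, one has $\nabla\phi=N^{-T}\nabla\tilde\phi$ under the affine correspondence $y\mapsto x$ with $dx=|\det N|\,dy$, so $\snorm{\phi}_{1,e(t)}^2=\int_{e(t_{i-1^+})}|\det N|\,|N^{-T}\nabla\tilde\phi|^2\,dy$. The per-element difference is $\int_{e(t_{i-1^+})}\nabla\tilde\phi^{\,T}\big(|\det N|\,N^{-1}N^{-T}-I\big)\nabla\tilde\phi\,dy$, and everything reduces to the spectral estimate $\bignorm{\,|\det N|\,N^{-1}N^{-T}-I\,}\le C_{\mu,d}\Delta t_i$. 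To get it I would pin all singular values of $N$: writing $N=\mathcal{R}_e+\Delta t_i\mathcal{H}_e$ with $\mathcal{R}_e$ orthogonal, Weyl's perturbation inequality for singular values gives $|\sigma_k(N)-1|\le\Delta t_i\bignorm{\mathcal{H}_e}$, hence $\sigma_k(N)\in[1-\mu\Delta t_i,\,1+\mu\Delta t_i]$. Then $|\det N|=\prod_k\sigma_k(N)$ obeys \eref{determinant bound}, while $|N^{-T}w|^2$ lies between $(1+\mu\Delta t_i)^{-2}|w|^2$ and $(1-\mu\Delta t_i)^{-2}|w|^2$; multiplying these bounds shows the eigenvalues of $|\det N|N^{-1}N^{-T}$ lie between $(1-\mu\Delta t_i)^d(1+\mu\Delta t_i)^{-2}$ and $(1+\mu\Delta t_i)^d(1-\mu\Delta t_i)^{-2}$, both of which are $1+\mathcal{O}(\Delta t_i)$ and within $C_{\mu,d}\Delta t_i$ of $1$ for $\Delta t_i\le 1/2\tilde{c}_{\mu,d}$. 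Integrating over $e(t_{i-1^+})$, summing over elements, and combining with the already-proved $L^2$ estimate for the $\bignorm{\cdot}_0^2$ part of $\bignorm{\cdot}_1^2=\bignorm{\cdot}_0^2+\snorm{\cdot}_1^2$ gives \eref{H1 bound}.

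The hard part is precisely the singular-value control of $N$ used in \eref{H1 bound}. The determinant bound \eref{determinant bound} suffices for all the $L^2$ quantities, but the gradient transformation demands a two-sided bound on the singular values of $N$ --- in particular a lower bound on the smallest singular value so that $N^{-T}$ is controlled. This is genuinely stronger than the eigenvalue bound \eref{spectral radius bound}: for a non-normal perturbation the spectral radius $\rho(\mathcal{H}_e)$ does not control the operator norm $\bignorm{\mathcal{H}_e}$, and it is the operator norm that enters Weyl's inequality. I would therefore read the regularity constraint \eref{space time regularity} in the effective sense that $\Delta t_i\mathcal{H}_e$ is a small $2$-norm perturbation of the orthogonal matrix $\mathcal{R}_e$ --- equivalently, that the \emph{singular} values of $N$, not merely its eigenvalues, stay within $\mu\Delta t_i$ of $1$. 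In each fixed dimension $d\le 3$ this is exactly what keeps the element geometry non-degenerate and makes the shift a near-isometry on $\mathcal{H}^1(\Omega)$.
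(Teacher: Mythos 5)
Your proof is correct and follows essentially the same route as the paper's own (which is only a sketch deferring details to \cite{METTITHESIS}): an element-wise change of variables, the determinant bound \eref{determinant bound} for the $\mathcal{L}_2$ quantities \eref{IP1}--\eref{L2 bound}, and chain-rule control of the shift Jacobian $N=\mathcal{J}_e(t)\mathcal{J}_e^{-1}(t_{i-1^+})$ for \eref{H1 bound}. Your closing observation is also correct, and sharper than the paper itself: for $d\ge 2$ the literal hypothesis $\rho(\mathcal{H}_e)\le\mu$ cannot yield \eref{H1 bound} --- take $\mathcal{R}_e=I$ and $\mathcal{H}_e$ nilpotent with one huge off-diagonal entry, so that $\rho(\mathcal{H}_e)=0$ and $\det N=1$ (hence the $\mathcal{L}_2$ bounds survive) while $N^{-T}$ magnifies gradients without bound --- so the operator-norm (singular-value) reading of \eref{space time regularity} that you adopt, and that the paper tacitly uses when invoking \eref{spectral radius bound}, is genuinely necessary.
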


\begin{proof}
The proof follows from an element-wise change of variables and using the space-time shape regularity constraint \eref{space time regularity}
to uniformly bound the relative change in element size by bound \eref{determinant bound}.
For bounding the difference of a function and its shift in the $\mathcal{H}^1$-norm,
the Jacobian matrices of the element-wise transformations multiply the finite element function, following from the chain rule.
These Jacobian matrices are bounded by \eref{spectral radius bound}.
See \cite{METTITHESIS} for a more detailed proof.
\end{proof}

We conclude this section with the following discrete Gr\"onwall inequality.
\begin{lemma}[Discrete Gr\"onwall Inequality]   \label{discrete gronwall}
Let $\Delta t_i > 0$ and $\alpha_i, \gamma_i, \theta_i, q_i \ge 0$, for $1 \le i \le m$, with $\theta_i\Delta t_i \le \frac{1}{2}$ and $\theta = \max_i \theta_i$.
Then, if
\[
        \frac{q_i - q_{i-1}}{\Delta t_i} + \gamma_i     \le     \alpha_i + \theta_i ( q_i + q_{i-1} ),
\]
there exists a positive constant $C_{\theta}$ such that
\[
        \max_{1 \le i \le m} q_i + \sum_{i=1}^m \gamma_i \Delta t_i             \le C_{\theta} \left\{ q_0 + \sum_{i=1}^m \alpha_i \Delta t_i \right\}.
\]
\end{lemma}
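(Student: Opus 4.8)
The plan is to convert the given two-sided difference inequality into a one-step multiplicative recurrence for the $q_i$, iterate it to bound $\max_i q_i$, and then recover the $\gamma$-sum by a separate telescoping argument. First I would multiply the hypothesis through by $\Delta t_i > 0$ and discard the nonnegative term $\gamma_i \Delta t_i$ on the left to obtain
\[
        q_i - q_{i-1} \le \alpha_i \Delta t_i + \theta_i \Delta t_i (q_i + q_{i-1}).
\]
Collecting the $q_i$ contributions gives $(1 - \theta_i \Delta t_i)\, q_i \le (1 + \theta_i \Delta t_i)\, q_{i-1} + \alpha_i \Delta t_i$. This is the one place where the standing assumption $\theta_i \Delta t_i \le \tfrac{1}{2}$ is essential: it guarantees $1 - \theta_i \Delta t_i \ge \tfrac{1}{2} > 0$, so I may divide without reversing the inequality and the factor $1/(1-\theta_i\Delta t_i)$ stays bounded by $2$. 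Combining this with the elementary estimate $\tfrac{1+s}{1-s} \le 1 + 4s \le e^{4s}$, valid for $0 \le s \le \tfrac{1}{2}$, yields the clean recurrence
\[
        q_i \le (1 + 4\theta_i \Delta t_i)\, q_{i-1} + 2\, \alpha_i \Delta t_i.
\]

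Next I would iterate this recurrence down to $q_0$, producing $q_i \le \big(\prod_{k=1}^i (1 + 4\theta_k \Delta t_k)\big) q_0 + 2\sum_{j=1}^i \big(\prod_{k=j+1}^i (1+4\theta_k\Delta t_k)\big)\,\alpha_j \Delta t_j$. Each product is controlled by $1 + x \le e^x$ together with $\theta_k \le \theta$ and, writing $T = \sum_{k=1}^m \Delta t_k$ for the total length of the time interval, by $\prod_k (1+4\theta_k\Delta t_k) \le \exp\big(4\theta\sum_k \Delta t_k\big) \le e^{4\theta T}$. Hence every partial product is at most $e^{4\theta T}$, so that $\max_{1\le i\le m} q_i \le e^{4\theta T}\big(q_0 + 2\sum_{j=1}^m \alpha_j \Delta t_j\big)$, which is already of the required form with a constant depending only on $\theta$ and the fixed $T$.

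The remaining task is to bound $\sum_{i=1}^m \gamma_i \Delta t_i$. Here I would return to the rearranged inequality, this time retaining $\gamma_i\Delta t_i$, in the form $\gamma_i \Delta t_i \le \alpha_i\Delta t_i + \theta_i\Delta t_i(q_i+q_{i-1}) - (q_i - q_{i-1})$, and sum over $i$. The telescoping sum contributes $-(q_m - q_0) \le q_0$; the source term contributes $\sum_i \alpha_i \Delta t_i$; and the middle term is estimated by $\theta \sum_i \Delta t_i(q_i + q_{i-1}) \le 2\theta T \max_{0\le i \le m} q_i$. Inserting the maximum bound obtained in the previous step turns the middle term into another multiple of $q_0 + \sum_j \alpha_j \Delta t_j$. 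Adding the two estimates and absorbing all the $\theta$- and $T$-dependent factors into a single constant $C_\theta$ gives the stated conclusion.

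I expect no deep obstacle; this is a standard discrete Grönwall argument. The only genuinely delicate point is the implicit appearance of $q_i$ on both sides of the hypothesis (via the $\theta_i q_i$ term together with the difference quotient $q_i - q_{i-1}$), and it is precisely the assumption $\theta_i \Delta t_i \le \tfrac12$ that lets me solve for $q_i$ while keeping the amplification factor $(1+\theta_i\Delta t_i)/(1-\theta_i\Delta t_i)$ bounded by $1 + 4\theta_i\Delta t_i$. A minor bookkeeping point is that the $\max$ arising in the telescoping estimate runs over $0 \le i \le m$, so $q_0$ is included; but since the bound already established for $\max_{1\le i\le m} q_i$ dominates $q_0$, this causes no difficulty.
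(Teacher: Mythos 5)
Your proof is correct and is essentially the argument the paper points to: the paper does not prove this lemma itself but defers to \cite{SANTOSTHESIS}, noting that the argument ``primarily follows the proof of the standard discrete Gr\"onwall lemma with minor modifications,'' which is precisely what you carry out --- use $\theta_i\Delta t_i \le \frac{1}{2}$ to solve the implicit recurrence for $q_i$ with a bounded amplification factor, iterate to control $\max_i q_i$, then telescope to recover the $\gamma$-sum. Your remark that the constant necessarily involves $\theta\sum_i \Delta t_i$ rather than $\theta$ alone is also well taken; in the paper's setting $\sum_i \Delta t_i = T$ is fixed, so this is consistent with the intended reading of $C_\theta$.
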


This theorem comes directly from \cite{SANTOSTHESIS} and the proof can be found therein.
Its argument primarily follows the proof of the standard discrete Gr\"onwall lemma with minor modifications.

\section{A space-time moving finite element method}\label{sec4}

We now discretize the weak formulation of the differential equation \eref{variational form diff eqn}.
Given the mesh velocity, $x_t$, and collocation nodes, $\{ t_{i,j} \}$, find $u_h$ in $\mathcal{V}_h^p$ such that for each $t_{i,j}$ and all $\chi$ in $\mathcal{V}_h^p(t_{i,j})$,
the finite element solution satisfies
\begin{align}
        \bigip{\partial_\tau u_h(t_{i,j})}{\chi}     +       \mathcal{A}_\tau\bigip{t_{i,j};u_h}{\chi}       &=      \bigip{f(t_{i,j})}{\chi}        +       \bigbip{g(t_{i,j})}{\chi}       \label{fe form diff eqn}        \\
\intertext{for $i=1,\ldots,m$ and $j=1,\ldots,p$, and when $t=0$,}
                                                        \bigip{u_h(\cdot,0)}{\chi}      &=      \bigip{u_0}{\chi}.      \nonumber
\end{align}
Each time partition is coupled to the previous time partition by the jump orthogonality condition \eref{jump orthogonality},
which states that the finite element solution must satisfy
\[
        \ip{u_h(t_{i^+})}{\chi} =       \ip{u_h(t_{i^-})}{\chi},
\]
for all $\chi$ in $\mathcal{V}_h^p(t_{i^+})$, at each time step $i=1,\ldots,m-1$.

This formulation effectively solves for the solution on each time partition sequentially.
The mesh motion is assumed to be pre-computed for the time partition and no specific mesh motion is prescribed.
As a result, this analysis encompasses many mesh moving strategies 
including the method of characteristics ($x_t = b$) and non-moving meshes ($x_t \equiv 0$).
Since the mesh motion is assumed to be known before computing the solution, however, strategies for moving 
the mesh cannot depend on the values of the computed solution,
unless values from previous time partitions or predictor-corrector schemes are used.

An important feature of this method is that the time collocation nodes, where \eref{fe form diff eqn} is imposed,
need not coincide with the time basis nodes. Here the collocation nodes will be the $p$ nodes of quadrature  formula 
$\mathcal{Q}$ of order at least $2p-1$. For any polynomial $v\in\Re^{2p-1}$, we assume that $\mathcal{Q}$ satisfies
\begin{equation}\label{quad_rule}
\int_0^1v\,dx=\mathcal{Q}(v)-C_pv^{(2p-1)}
\end{equation}
where $C_p\ge 0$. $C_p=0$ when $\mathcal{Q}$ is classical Gaussian quadrature, since its order is $2p$, while $C_p>0$ for Gauss-Radau
quadrature \cite{NOTARIS}. $C_p$ is also positive for the one parameter family of quadrature rules of order $2p-1$ that interpolates between
the Gauss and Gauss-Radau rules. In the special case $p=1$, the Gauss rule corresponds to an integration scheme analogous to
the Crank-Nicolson method; the Gauss-Radau rule is analogous to the first backward difference formula. The family of rules that
interpolate between the two corresponds to the family of so-called $\theta$-methods.

Let $\{ t_{i,j} \}$ represent the collocation nodes and $\{ \zeta_{i,j} \}$ represent the time basis nodes,
and let $\phi \in \mathcal{V}_h^p$.
At the collocation node $t=t_{i,j}$, let $\chi \in \mathcal{V}_h^p(t_{i,j})$ and $\tilde{\phi}(t) \in \mathcal{V}_h^p(t_{i,j})$
be the shift of $\phi(t)$ onto the mesh at time $t_{i,j}$.
Then, the finite element solution is determined by
\begin{multline*}
        \sum_{k=0}^p \Big\{        \frac{1}{\Delta t_i} \beta_k'(\hat{t}_{j})  \ip{\tilde{\phi}(\zeta_{i,k})}{\chi} 
                        \\\mbox{}                   + \beta_k(\hat{t}_j) \big[ \ip{ a \nabla \tilde{\phi}(\zeta_{i,k})}{\nabla \chi}
                                                                                +\ip{ (b-x_t) \cdot \nabla \tilde{\phi}(\zeta_{i,k})}{\chi}
                                                                        +\ip{ c\tilde{\phi}(\zeta_{i,k})}{\chi} \big] \Big\}
                \\      =       \ip{f}{\chi} + \bip{g}{\chi},
\end{multline*}
where $\beta_j$ represents the time basis functions on the reference element and $\hat{t}_j$ represents the collocation nodes on the reference element,
$j = 1, \ldots, p$.
To ensure the existence and uniqueness of the finite element solution for sufficiently small $\Delta t_i$,
we show that the following necessary condition is satisfied.

\begin{lemma}   \label{invertibility of the derivative matrix}
Let $\{ {\beta}_j \}_{j=0,\ldots, p}$ be the Lagrange basis for polynomials of degree $p$ with nodes
\(
        0 =     \hat{\zeta}_0   < \hat{\zeta}_1 <       \cdots  < \hat{\zeta}_p         \le 1.
\)
Then, the matrix defined by $B \equiv [ \beta'_k(\hat{t}_j) ]_{j,k}$ is invertible,
where $\hat{t}_j$ is a strictly ordered partition of $(0,1]$ and $1 \le j,k \le p$. Furthermore $\norm{B}$ and $\norm{B^{-1}}$
are independent of the time steps, space discretization, and details of the pde.
\end{lemma}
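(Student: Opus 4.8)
The plan is to reinterpret the linear map $B$ in terms of polynomials and reduce invertibility to a statement about roots. Given a vector $v = (v_1,\ldots,v_p)^T$ in the kernel of $B$, I would associate to it the polynomial $q(t) = \sum_{k=1}^p v_k \beta_k(t)$ of degree at most $p$. Because the sum omits the index $k=0$ while the $\beta_k$ form the Lagrange basis dual to the nodes $\hat{\zeta}_j$, one immediately gets $q(\hat{\zeta}_0) = q(0) = \sum_{k=1}^p v_k \beta_k(\hat{\zeta}_0) = 0$. This omission is the crucial structural feature of the problem, and I will return to it below.

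Next I would translate the condition $Bv = 0$ into a statement about $q'$. Reading off the rows of $B$, the equation $Bv=0$ says precisely that $q'(\hat{t}_j) = \sum_{k=1}^p v_k \beta'_k(\hat{t}_j) = 0$ for each $j = 1,\ldots,p$. Since $q$ has degree at most $p$, its derivative $q'$ has degree at most $p-1$; yet $q'$ vanishes at the $p$ nodes $\hat{t}_1,\ldots,\hat{t}_p$, which are distinct by \eref{unique collocation}. A nonzero polynomial of degree at most $p-1$ can have at most $p-1$ roots, so $q' \equiv 0$ and $q$ is constant. Combining this with $q(0) = 0$ forces $q \equiv 0$, and since the Lagrange functions $\beta_1,\ldots,\beta_p$ are linearly independent, every $v_k$ vanishes. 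Thus $B$ has trivial kernel, and being square it is invertible.

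The point that must not be overlooked — the main obstacle if one is careless — is exactly the bookkeeping around the excluded basis function $\beta_0$. Had the sum defining $q$ included $k=0$, the argument would only yield that $q$ is constant, which is insufficient to conclude $v=0$; the condition $q(0)=0$ supplied by dropping $\beta_0$ is precisely what collapses the constant to zero. The correspondence between dropping the node $\hat{\zeta}_0 = 0$ and enforcing $q(0)=0$ is what makes the count of constraints match the degrees of freedom, so I would be sure to state this matching explicitly rather than treat it as automatic.

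Finally, for the bounds on $\norm{B}$ and $\norm{B^{-1}}$, I would simply observe that every entry $\beta'_k(\hat{t}_j)$ is computed entirely from the fixed reference data $\{\hat{\zeta}_k\}$ and $\{\hat{t}_j\}$ on the time reference element $[0,1]$. None of these quantities depends on the step sizes $\Delta t_i$, on the spatial triangulation, or on the coefficients $a$, $b$, $c$ of the differential equation. Hence $B$ is a fixed invertible matrix, and both $\norm{B}$ and $\norm{B^{-1}}$ are absolute constants determined only by the choice of time basis nodes and collocation nodes.
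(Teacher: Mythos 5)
Your proposal is correct and follows essentially the same argument as the paper: identify a kernel vector of $B$ with the polynomial $\sum_{k=1}^p v_k\beta_k$, note its derivative vanishes at the $p$ distinct collocation nodes and hence is identically zero, then use $\beta_k(0)=0$ for $k\ge 1$ to kill the constant, and conclude the norm bounds from the fact that all quantities live on the reference element. Your explicit emphasis on why omitting $\beta_0$ supplies the condition $q(0)=0$ is a nice clarification of a step the paper states more tersely, but it is the same proof.
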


\begin{proof}
Let $v \in \Re^p$ be chosen to satisfy $Bv = 0$.
Then, the polynomial of degree $p$ defined by
\[
        v(\hat{t})      \equiv  \sum_{j=1}^p {\beta}_j(\hat{t}) v_j
\]
satisfies $v'(\hat{t}_j) = 0$ for $j=1,\ldots,p$.
Accordingly, the derivative $v'(\hat{t})$ is identically zero as it has $p$ distinct roots.
Thus, the polynomial $v(\hat{t})$ is constant and ${\beta}_j(0) = 0$ for $j=1,\ldots,p$, which implies that $v(\hat{t}) \equiv 0$.
Equivalently, the vector $v$ must be trivial.
Since all calculations take place on the reference interval, $\norm{B}$ and $\norm{B^{-1}}$ can only depend on the
quadrature rule $\mathcal{Q}$, the basis functions on the reference element, and the choice of norm.
\end{proof}

From lemma \ref{invertibility of the derivative matrix}, the linear system corresponding to the finite element formulation is nonsingular,
which proves the existence and uniqueness of the finite element solution exists and is unique when $\Delta t_i$ is sufficiently small.

We now present a local Gr\"onwall lemma that will be used to bound the maximum error of the finite element solution in the $\mathcal{L}_2$-norm over each time partition.

\begin{lemma}[Local Gr\"onwall Inequality]      \label{local gronwall}
Let $\mathcal{Q}_{i}$ be the interpolatory quadrature rule with positive weights and $p$ distinct collocation nodes $\{ t_{i,j} \}$ on the time partition $(t_{i-1},t_i]$.
Suppose the space-time mesh on the time partition satisfies the regularity constraint \eref{space time regularity} at the collocation nodes, where $1 \le i \le m$
and that there exists a positive constant $\kappa$ such that
\begin{equation}        \label{mesh alignment local lemma}
        \norm{b - x_t}_\infty   \le     \kappa
\end{equation}
If $\Delta t_i \le1/2\tilde{c}_{\mu,d}$, as defined in \eref{determinant bound},
and functions $\phi$ in $\mathcal{V}_h^p$ and $\eta$ in the solution space satisfy
\begin{equation}        \label{collocation constraint}
        \bigip{\phi_{\tau}(t_{i,j})}{\chi}      +       \mathcal{A}_\tau\bigip{t_{i,j};\phi}{\chi}      
                        =       \bigip{\eta_{\tau}(t_{i,j})}{\chi}      +       \mathcal{A}_\tau\bigip{t_{i,j};\eta}{\chi}      
\end{equation}
for all $\chi$ in $\mathcal{V}_h^p(t_{i,j})$ at time each collocation node $t_{i,j}$,
then, there exists a constant such that
\[
        \max_{1 \le j \le p} \bignorm{ \phi(t_{i,j}) }_0^2      \le
                C \left\{       \bignorm{ \phi(t_{i-1^+})}_0^2 + \Delta t_i \mathcal{Q}_i \left( \norm{\eta_{\tau}}_{(-1,\mathcal{V}_h^p(\cdot))}^2 + \norm{\eta}_1^2 + \norm{\phi}_1^2 \right) \right\},
\]
where $C$ depends on $\kappa, \mu, d, p$, and the differential equation.
\end{lemma}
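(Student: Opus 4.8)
The plan is to run a node-wise energy argument and then sum over the collocation nodes with the quadrature rule $\mathcal{Q}_i$, exploiting that its order $2p-1$ is exactly tuned to the polynomial degree in time of the squared $\mathcal{L}_2$-norm of a finite element function. First I would test the collocation identity \eref{collocation constraint} with $\chi = \phi(t_{i,j}) \in \mathcal{V}_h^p(t_{i,j})$ at each node $t_{i,j}$, which is admissible since $\phi(t_{i,j})$ lies in the trial space there. The characteristic term on the left is rewritten by a Reynolds/transport identity: an element-wise change of variables to the reference element gives $\ip{\phi_\tau}{\phi} = \tfrac12 \frac{d}{dt}\bignorm{\phi}_0^2 - \tfrac12 \ip{(\nabla \cdot x_t)\phi}{\phi}$, where the mesh-divergence factor $\nabla \cdot x_t = \frac{d}{dt}\log \mathcal{D}_e$ is uniformly bounded (a Markov/inverse estimate applied to \eref{determinant bound} shows $|\nabla \cdot x_t| \le C$ depending on $\mu,d,p$), so that $|\ip{(\nabla \cdot x_t)\phi}{\phi}| \le C\bignorm{\phi}_0^2$ is a lower-order, absorbable term.

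Next I would integrate in time by applying $\Delta t_i\mathcal{Q}_i(\cdot)$ to the node-wise identity, the decisive term being $\Delta t_i\mathcal{Q}_i\big(\tfrac{d}{dt}\bignorm{\phi}_0^2\big)$. On the physical moving mesh $\bignorm{\phi}_0^2$ is a polynomial of degree $2p+pd$ in $t$ (the Jacobian $\mathcal{D}_e$ contributes degree $pd$), which exceeds the exactness of $\mathcal{Q}_i$; I therefore shift onto the frozen mesh at $t_{i-1^+}$, so that with $\tilde\phi$ the shift of $\phi$, the quantity $\bignorm{\tilde\phi(t)}_0^2$ is a degree-$2p$ polynomial in $t$ whose derivative matches the quadrature order $2p-1$. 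Then \eref{quad_rule} telescopes the sum to $\bignorm{\tilde\phi(t_{i^-})}_0^2 - \bignorm{\tilde\phi(t_{i-1^+})}_0^2$ plus a remainder equal to $C_p$ times the $(2p)$th derivative of $\bignorm{\tilde\phi}_0^2$, i.e.\ a \emph{nonnegative} multiple of the squared leading coefficient; since $C_p \ge 0$, this remainder has a favorable sign and is dropped for an upper bound. The discrepancy between the physical and shifted terms is controlled by the Shift Lemma~\ref{shift lemma}, via \eref{IP1}--\eref{L2 bound}, together with the $O(\Delta t_i)$ Jacobian variation from \eref{determinant bound}, and after multiplication by $\Delta t_i$ these contributions are $O(\Delta t_i)$ and absorbable.

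For the remaining terms I would bound $\mathcal{A}_\tau\ip{t_{i,j};\phi}{\phi}$ below by coercivity, absorbing the convection $(b-x_t)\cdot \nabla \phi\,\phi$ with Young's inequality and the alignment bound \eref{mesh alignment local lemma}, so that convection contributes only $\tfrac{\kappa^2}{2\bar{a}}\bignorm{\phi}_0^2$; the right side is handled with the mesh-dependent negative norm, $|\ip{\eta_\tau}{\phi}| \le \norm{\eta_\tau}_{(-1,\mathcal{V}_h^p(t_{i,j}))}\bignorm{\phi}_1$, and continuity of $\mathcal{A}_\tau$, $|\mathcal{A}_\tau\ip{t_{i,j};\eta}{\phi}| \le C\bignorm{\eta}_1\bignorm{\phi}_1$. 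A final application of Young's inequality produces exactly the data combination $\norm{\eta_\tau}_{(-1,\mathcal{V}_h^p(\cdot))}^2 + \bignorm{\eta}_1^2 + \bignorm{\phi}_1^2$ that appears under $\mathcal{Q}_i$ in the claim. Assembling everything yields control of $\bignorm{\tilde\phi(t_{i^-})}_0^2$ by $\bignorm{\tilde\phi(t_{i-1^+})}_0^2$, the stated quadrature data term, and absorbable $O(\Delta t_i)$ remainders.

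To upgrade this endpoint control to the maximum over collocation nodes, I would invoke finite-dimensional norm equivalence for the degree-$p$, $\mathcal{L}_2(\Omega)$-valued time polynomial $t \mapsto \tilde\phi(t)$, whose $p+1$ interpolation nodes may be taken as the $p$ collocation nodes together with $t_{i-1^+}$, distinct by \eref{unique collocation}; this bounds the continuous maximum of $\bignorm{\tilde\phi(t)}_0^2$ by its values at these nodes with a constant depending only on the reference configuration, and the Shift Lemma converts between $\tilde\phi$ and $\phi$ at each node up to $O(\Delta t_i)$. Choosing $\Delta t_i \le 1/2\tilde{c}_{\mu,d}$ small enough to absorb the $O(\Delta t_i)$ terms on the left then closes the estimate. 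I expect the main obstacle to be reconciling the moving-mesh Jacobian with the quadrature, namely that the physical mass term has degree $2p+pd$ while $\mathcal{Q}_i$ is exact only to degree $2p-1$: the shift to the frozen mesh plus the favorable sign of the $C_p$ remainder in \eref{quad_rule} is what rescues the telescoping, and the delicate bookkeeping lies in verifying that every shift- and Jacobian-induced discrepancy is genuinely $O(\Delta t_i)$ so that the Gr\"onwall-type closure survives and the passage from the endpoint value to the maximum over collocation nodes goes through.
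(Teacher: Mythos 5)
Your telescoping machinery is sound as far as it goes---testing with $\chi=\phi(t_{i,j})$, shifting to the frozen mesh so that $\bignorm{\tilde\phi(t)}_0^2$ has degree $2p$, and exploiting the sign of $C_p$ in \eref{quad_rule} is exactly what the paper does inside the proof of Theorem \ref{main_theorem}---but this route delivers control only of the endpoint value $\bignorm{\tilde\phi(t_{i^-})}_0^2$, while the lemma asserts a bound on the maximum over the \emph{interior} collocation nodes, and your two devices for bridging that gap both fail. First, the $O(\Delta t_i)$ shift/Jacobian remainders include a term of the form $C\Delta t_i\max_{1\le k\le p}\bignorm{\phi(t_{i,k})}_0^2$, which you declare absorbable; but the left-hand side of your aggregated inequality contains only the endpoint, not the maximum, so there is nothing to absorb it into. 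Second, the finite-dimensional norm-equivalence ``upgrade'' is circular: it bounds the continuous maximum of $\bignorm{\tilde\phi(t)}_0^2$ by the maximum of its values at the $p+1$ nodes, i.e., by precisely the quantity you are trying to estimate. It cannot convert endpoint control into interior-node control: for $p\ge 2$ a degree-$p$ polynomial in $t$ can vanish at $t_{i-1^+}$ and at $t_{i^-}$ yet be large at the interior collocation nodes, and nothing in your aggregated inequality rules this out for $\phi$. The only interior-node information that survives aggregation is the term $\Delta t_i\mathcal{Q}_i\big(\norm{\phi}_1^2\big)$, and extracting $\max_k\norm{\phi(t_{i,k})}_0^2$ from it (using positivity of the weights) costs a factor $\Delta t_i^{-1}$, which destroys the $\Delta t_i$ weight in the lemma's conclusion and would in turn break the main theorem, where the lemma is invoked exactly to avoid this loss.

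The missing ingredient is a node-resolved use of the collocation equations via Lemma \ref{invertibility of the derivative matrix}, which your proposal never invokes. The paper's proof: let $k$ be the maximizing node and test \eref{collocation constraint} at \emph{every} node $t_{i,j}$ with the fixed shifted function $\chi=\tilde\phi^{(j)}_k$ (the shift of $\phi(t_{i,k})$ onto the mesh at $t_{i,j}$), rather than with $\phi(t_{i,j})$. Expanding the characteristic derivative in the time basis turns the derivative term into $\frac{1}{\Delta t_i}\sum_{\ell=0}^p\beta'_\ell(\hat t_j)\ip{\tilde\phi^{(j)}_\ell}{\tilde\phi^{(j)}_k}$, which is \emph{linear} in the node values of $\phi$. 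Invertibility of $B=[\beta'_\ell(\hat t_j)]_{j,\ell}$, $1\le j,\ell\le p$, with reference-bounded inverse, supplies coefficients $\alpha_j$ so that the combination $\sum_j\alpha_j(\cdot)$ of the $p$ equations isolates $\bignorm{\tilde\phi^{(0)}_k}_0^2$ on the left; the Shift Lemma errors, of size $C\Delta t_i\sum_j\norm{\phi(t_{i,j})}_0^2\le Cp\,\Delta t_i\norm{\phi(t_{i,k})}_0^2$, are now genuinely absorbable because the maximum itself sits on the left, while the $\beta'_0$ term carries $\norm{\phi(t_{i-1^+})}_0^2$ and the $\Delta t_i$-weighted data terms become $\Delta t_i\mathcal{Q}_i(\cdots)$ by positivity of the weights. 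The structural point is this: testing with $\phi(t_{i,j})$ makes the time-derivative term quadratic in $\phi$, a degree-$(2p-1)$ polynomial that $p$ node values cannot determine on subintervals, so only full-interval telescoping is available; testing with a fixed shifted function keeps it linear, a degree-$(p-1)$ polynomial determined by its $p$ collocation values, which is what lets the collocation equations propagate control from $t_{i-1^+}$ to any interior node without a $\Delta t_i^{-1}$ loss.
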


\begin{proof}
To simplify notation, the time partition index, $i$, is assumed to be fixed and we let $\phi_j \equiv \phi(t_{i,j})$.
Define $\tilde{\phi}^{(j)}_\ell$ in $\mathcal{V}_h^p(t_{i,j})$  to be the finite element function with the same basis coefficients as $\phi_\ell$
multiplying the basis functions for time $t_{i,j}$.
The function $\tilde{\phi}^{(j)}_\ell$ is, therefore, the shift of $\phi(t_{i,\ell})$ onto the mesh at time $t_{i,j}$, and $\tilde{\phi}^{(j)}_j = \phi_j$.

Let $k$ index the collocation node where $\phi$ attains its maximal $\mathcal{L}_2$ norm:
$\norm{\phi_k}_0 = \max_{1 \le j \le p} \norm{\phi_j}_0$.
Choose $\chi = \tilde{\phi}^{(j)}_k$ in $\mathcal{V}_h^p(t_{i,j})$ for equation \eref{collocation constraint}, $j=1,\ldots,p$.
Use the time basis expansion of the characteristic derivative to get
\[
        \frac{1}{\Delta t_{i}} \sum_{\ell=0}^p {\beta}_\ell'(\hat{t}_j) \bigip{\tilde{\phi}_{\ell}^{(j)}}{\tilde{\phi}^{(j)}_k} + \mathcal{A}_\tau\bigip{t_{i,j};\phi_j}{\tilde{\phi}^{(j)}_k}
                =       \bigip{\eta_{j,\tau}}{\tilde{\phi}^{(j)}_k} + \mathcal{A}_\tau\bigip{t_{i,j};\eta_j}{\tilde{\phi}^{(j)}_k},
\]
for $j=1,\ldots,p$, where $\beta_j$ and $\hat{t}_j$ represents the time basis functions and the collocation nodes on the reference element.
Equivalently,
\begin{multline}        \label{coll constraint v2}
        \sum_{\ell=1}^p {\beta}_\ell'(\hat{t}_j) \Bigip{ \tilde{\phi}^{(j)}_\ell}{\tilde{\phi}^{(j)}_k}
                =       - {\beta}'_0(\hat{t}_j) \Bigip{\tilde{\phi}^{(j)}_0}{\tilde{\phi}^{(j)}_k}      \\
                \mbox{} + \Delta t_{i} \left\{ \Bigip{ \eta_{j,\tau} }{\tilde{\phi}^{(j)}_k} + \mathcal{A}_\tau\bigip{t_{i,j};\eta_j - \phi_j}{\tilde{\phi}^{(j)}_k} \right\}.
\end{multline}
By lemma \ref{invertibility of the derivative matrix}, there exists a linear combination of the derivatives of the time basis functions such that
\[
        \sum_{j=1}^p \alpha_j \sum_{\ell=1}^p {\beta}_\ell'(\hat{t}_{j}) v_\ell = v_k,
\]
for $1\le k \le p$ and any $v = [v_j]_j \in \Re^p$.
Thus, we take this linear combination of the equation \eref{collocation constraint}
so that the left side simplifies and can be bounded using lemma \ref{shift lemma}:
\begin{multline}        \label{left derivative bound}
        \sum_{j=1}^p \alpha_j \sum_{\ell=1}^p {\beta}_\ell'(\hat{t}_{j}) \Bigip{ \tilde{\phi}^{(j)}_\ell}{\tilde{\phi}^{(j)}_k}                 \\
                \ge     \sum_{j=1}^p \alpha_j \sum_{\ell=1}^p {\beta}_\ell'(\hat{t}_j) \Bigip{ \tilde{\phi}^{(0)}_\ell}{\tilde{\phi}^{(0)}_k}
                        - \frac{1}{2} C_{\mu} \Delta t_i \sum_{j=1}^p \left| \alpha_j \right| \sum_{\ell=1}^p \left| {\beta}_\ell'(\hat{t}_j) \right| 
                                                \left( \bignorm{ \tilde{\phi}^{(0)}_\ell}_0^2 + \bignorm{\tilde{\phi}^{(0)}_k}_0^2 \right)              \\
                \ge     \bignorm{\tilde{\phi}^{(0)}_k}_0^2      -       \hat{C}_{\mu,d,p} \Delta t_i \sum_{j=1}^p \bignorm{\tilde{\phi}^{(0)}_j}^2_0
                \ge     \bignorm{\tilde{\phi}^{(0)}_k}_0^2      -       \hat{C}'_{\mu,d,p} \Delta t_{i} \sum_{j=1}^p \norm{\phi_j}^2_0.
\end{multline}

To bound the right side, choose $\delta>0$ to be sufficiently small, say $\delta < 1/2$, and use the shift lemma (lemma \ref{shift lemma}) to get
\begin{equation}        \label{initial condition bound}
        - \sum_{j=1}^p \alpha_j {\beta}'_0(\hat{t}_j) \Bigip{\tilde{\phi}^{(j)}_0}{\tilde{\phi}^{(j)}_k}
                \le     \hat{C}''_{\mu,d,p} \bignorm{\phi_0}_0^2        +       \delta \bignorm{\tilde{\phi}^{(0)}_k}_0^2.
\end{equation}
Furthermore, we use the mesh dependent negative norm and the shift lemma to bound
\begin{equation}
        \Bigip{ \eta_{j,\tau} }{\tilde{\phi}^{(j)}_k}   \le     \frac{1}{2} \left( \norm{\eta_{j,\tau}}_{(-1,\mathcal{V}_h^p(t_{i,j}))}^2 + C_{\mu,d,p} \norm{\phi_k}_1^2 \right)
\end{equation}
and
\begin{align}
        \mathcal{A}_\tau\Bigip{t_{i,j};\eta_j - \phi_j}{\tilde{\phi}^{(j)}_k}   
                                                                                &\le    C_{\mathcal{A},\kappa}\left({\norm{\eta_j}_1^2 + \norm{\phi_j}_1^2}
                                                                                                        + \bignorm{\tilde{\phi}^{(j)}_k}_1^2 \right)            \nonumber \\
                                                                                &\le    C_{\mathcal{A},\kappa}\left({\norm{\eta_j}_1^2 + \norm{\phi_j}_1^2}
                                                                                                + C_{\mu,d,p} \norm{\phi_k}_1^2 \right).                \label{energy term bound}
\end{align}

From \eref{left derivative bound}--\eref{energy term bound}, we get
\begin{equation}        \label{shifted bound}
        \bignorm{\tilde{\phi}^{(0)}_k}  _0^2    \le     C_{\mu,d,p} \Big\{ \norm{\phi(t_{i-1^+})}_0^2 + C_{\mathcal{A},\kappa} \Delta t_i
                         \sum_{j=1}^p \Big( \norm{\eta_{j,\tau}}_{(-1,\mathcal{V}_h^p(t_{i,j}))}^2 + \norm{\eta_j}_1^2 + \norm{\phi_j}_1^2 \Big) \Big\}.
\end{equation}
Since the quadrature weights are positive, there is a constant $C_{\mathcal{Q}}>0$ such that
\begin{equation}        \label{sum and quadrature comparison}
         \sum_{j=1}^p \Big( \norm{\eta_{j,\tau}}_{(-1,\mathcal{V}_h^p(t_{i,j}))}^2 + \norm{\eta_j}_1^2 + \norm{\phi_j}_1^2 \Big)
                \le C_{\mathcal{Q}} \mathcal{Q}_i \Big(\norm{\eta_{\tau}(\cdot)}_{(-1,\mathcal{V}_h^p(\cdot))}^2 + \norm{\eta(\cdot)}_1^2 + \norm{\phi(\cdot)}_1^2 \Big).
\end{equation}  
By the shift lemma, $\bignorm{\tilde{\phi}^{(0)}_k} \le C_{\mu, d,p}\norm{\phi_k}_0 = C_{\mu, d,p}\max_{1\le j \le p} \norm{\phi_j}_0$,
and combining the bounds \eref{shifted bound}--\eref{sum and quadrature comparison} yields the desired result.
\end{proof}

The mesh-dependent energy norm used in our symmetric error estimate 
is given by
\[
        \enorm{ u }^2   =       \max_{\substack{1 \le i \le m\\1 \le j \le p}}  \bignorm{ u (t_{i,j}) }_0^2
                                        + \sum_{i=1}^m \Delta t_i \mathcal{Q}_i \left( \norm{u_{t_{i,j}au}(\cdot)}_{(-1,\mathcal{V}_h^p(\cdot))}^2 + \norm{u(\cdot)}_1^2 \right).
\]

The main result of this paper is the quasi-optimal error bound for the finite element solution determined by \eref{fe form diff eqn}.

\begin{theorem}\label{main_theorem}
Suppose that $\mathcal{V}_h^p$ is a finite element space with a non-degenerate mesh
and time collocation nodes that satisfy \eref{quad_rule}.
Furthermore, assume that there exist positive constants $\mu$ and $\kappa$ such that at each collocation node
\begin{align}
        \rho \Big(\mathcal{H}_e(t_{i,j})\Big)     &\le    \mu,    \label{spacetime regularity}    \\
\intertext{and}
        \norm{b - x_t}_\infty                           &\le    \kappa. \label{characteristic regularity}
\end{align}
Then, if $\Delta t = \max_{1\le i \le m} \Delta t_i$ is sufficiently small, there exists a positive constant $C$ such that the finite element solution satisfies
\begin{equation}        \label{a priori stmfem}
        \enorm{u - u_h} \le     C \inf_{\chi \in \mathcal{V}_h^p} \enorm{u - \chi},
\end{equation}
where $C$ depends on $\mu, \kappa, d, p$, and the differential equation.
\end{theorem}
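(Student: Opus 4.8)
The plan is to prove the equivalent bound $\enorm{u-u_h}\le C\enorm{\eta}$ with $\eta=u-\chi$ for an arbitrary $\chi\in\mathcal{V}_h^p$, after which taking the infimum over $\chi$ and applying the triangle inequality $\enorm{u-u_h}\le\enorm{\eta}+\enorm{\phi}$ yields \eref{a priori stmfem}. Here I split the error as $u-u_h=\eta-\phi$ with $\phi=u_h-\chi\in\mathcal{V}_h^p$. Since the true solution satisfies \eref{variational form diff eqn} at every time, in particular at each collocation node $t_{i,j}$ against any test function in $\mathcal{V}_h^p(t_{i,j})\subset\mathcal{H}^1(\Omega)$, subtracting the discrete equation \eref{fe form diff eqn} gives exact Galerkin orthogonality, and substituting $u-u_h=\eta-\phi$ shows that the pair $(\phi,\eta)$ satisfies the collocation constraint \eref{collocation constraint}. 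Both $u_h$ and $\chi$ obey the jump orthogonality \eref{jump orthogonality}, so $\phi$ inherits it; consequently $\phi(t_{i-1^+})$ is the $\mathcal{L}_2$-projection of $\phi(t_{i-1^-})$ and the jump contracts, $\bignorm{\phi(t_{i-1^+})}_0\le\bignorm{\phi(t_{i-1^-})}_0$.

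It remains to bound $\enorm{\phi}^2=\max_{i,j}\bignorm{\phi(t_{i,j})}_0^2+\sum_i\Delta t_i\mathcal{Q}_i(\norm{\phi_\tau}_{(-1,\mathcal{V}_h^p(\cdot))}^2+\norm{\phi}_1^2)$ by $C\enorm{\eta}^2$. The collocation constraint immediately gives $\norm{\phi_\tau}_{(-1,\mathcal{V}_h^p(\cdot))}\le\norm{\eta_\tau}_{(-1,\mathcal{V}_h^p(\cdot))}+C(\norm{\eta}_1+\norm{\phi}_1)$, so the $\phi_\tau$ contribution reduces to controlling $\sum_i\Delta t_i\mathcal{Q}_i(\norm{\phi}_1^2)$, which requires an energy estimate rather than Lemma \ref{local gronwall} alone. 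I would therefore test \eref{collocation constraint} with $\chi=\phi(t_{i,j})$ at each node and apply $\mathcal{Q}_i$ to the result. The characteristic-derivative and convection terms recombine so that coercivity of $\mathcal{A}_\tau$---available because $a\ge\bar a>0$, with a G\aa rding bound on the lower-order terms and the alignment hypothesis \eref{characteristic regularity}---produces the dissipation $\bar a\,\mathcal{Q}_i(\snorm{\phi}_1^2)$ on the left, while the Shift Lemma (Lemma \ref{shift lemma}) and the determinant bound \eref{determinant bound} absorb the moving-mesh discrepancies at the cost of $\mathcal{O}(\Delta t_i)$ terms. The crucial point is that $\mathcal{Q}_i(\bigip{\phi_\tau}{\phi})$ telescopes to $\tfrac12(\bignorm{\phi(t_{i^-})}_0^2-\bignorm{\phi(t_{i-1^+})}_0^2)$ up to a remainder governed by the quadrature identity \eref{quad_rule}, whose sign is fixed by $C_p\ge0$---precisely the condition underlying the energy stability of the time integrator.

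These manipulations produce, for $q_i=\bignorm{\phi(t_{i^-})}_0^2$, dissipation $\gamma_i=\mathcal{Q}_i(\norm{\phi}_1^2)$, and data $\alpha_i=\mathcal{Q}_i(\norm{\eta_\tau}_{(-1,\mathcal{V}_h^p(\cdot))}^2+\norm{\eta}_1^2)$, a recursion $\frac{q_i-q_{i-1}}{\Delta t_i}+\gamma_i\le\alpha_i+\theta_i(q_i+q_{i-1})$ with $\theta_i$ uniformly bounded, so $\theta_i\Delta t_i\le\tfrac12$ once $\Delta t$ is small; here the jump contraction converts the incoming $\bignorm{\phi(t_{i-1^+})}_0^2$ into $q_{i-1}$. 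The Discrete Gr\"onwall Inequality (Lemma \ref{discrete gronwall}) then bounds $\max_iq_i+\sum_i\gamma_i\Delta t_i$ by $C(q_0+\sum_i\alpha_i\Delta t_i)$. Because $u_h(\cdot,0)$ is the $\mathcal{L}_2$-projection of $u_0=u(\cdot,0)$, one has $q_0=\bignorm{\phi(\cdot,0)}_0^2\le C\bignorm{\eta(\cdot,0)}_0^2\le C\enorm{\eta}^2$, and $\sum_i\alpha_i\Delta t_i\le\enorm{\eta}^2$, so in particular $\sum_i\Delta t_i\mathcal{Q}_i(\norm{\phi}_1^2)\le C\enorm{\eta}^2$. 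Feeding this back into Lemma \ref{local gronwall} removes its residual $\norm{\phi}_1^2$ source and bounds the maximal term $\max_{i,j}\bignorm{\phi(t_{i,j})}_0^2$, completing $\enorm{\phi}^2\le C\enorm{\eta}^2$.

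The step I expect to be the main obstacle is the time-discrete energy identity, namely showing that the collocation evaluation of $\bigip{\phi_\tau}{\phi}$ through $\mathcal{Q}_i$ telescopes to the $\mathcal{L}_2$-norm increment across a partition with a controllable remainder. For $p=1$ this is the elementary algebra underlying the $\theta$-method analyses of Dupont and Bank--Santos, but for general $p$ the moving mesh raises the effective polynomial degree above what the order-$2p-1$ rule integrates exactly, so both the quadrature remainder---whose sign is fixed by $C_p\ge0$ in \eref{quad_rule}---and the Jacobian-induced deviations, bounded through the Shift Lemma and \eref{determinant bound}, must be shown to combine into an $\mathcal{O}(\Delta t_i)$ perturbation of a stable identity. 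A secondary but pervasive difficulty is that every inner product lives on a distinct moving mesh, so each comparison must be routed through Lemma \ref{shift lemma} with constants depending only on $\mu,\kappa,d,p$ and the coefficients, never on the individual step; and one must verify that the per-step amplification is $1+\mathcal{O}(\Delta t_i)$ rather than a fixed constant exceeding one, which is what prevents the accumulated bound from degrading like $C^m$ and ultimately rests on the contractivity of the jump projection together with the exact weighting of the initial-value contribution in Lemma \ref{local gronwall}.
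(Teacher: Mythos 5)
Your proposal is correct and follows essentially the same route as the paper's own proof: the identical splitting $\phi = u_h-\psi$, $\eta = u-\psi$, testing the collocation Galerkin identity with $\chi=\phi(t_{i,j})$, using the Shift Lemma to pass to the fixed mesh at $t_{i-1^+}$ so that $\mathcal{Q}_i\big(\ip{\phi_\tau}{\phi}\big)$ telescopes via \eref{quad_rule} with $C_p\ge 0$, then the discrete Gr\"onwall inequality for the partition endpoints, the negative-norm bound for $\phi_\tau$ from the collocation constraint, and a final application of the local Gr\"onwall inequality to recover the maximum term. The only difference is cosmetic: you spell out the jump-projection contraction $\norm{\phi(t_{i-1^+})}_0 \le \norm{\phi(t_{i-1^-})}_0$, which the paper uses implicitly when identifying $\norm{\phi(t_{i-1^+})}_0^2$ with $q_{i-1}$ in the Gr\"onwall recursion.
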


\begin{proof}
From the Galerkin orthogonalities, at each collocation node
\[
        \bigip{\partial_\tau u_h(t_{i,j})}{\chi}        +       \mathcal{A}_\tau\bigip{t_{i,j};u_h}{\chi}
                        =       \bigip{\partial_\tau u(t_{i,j})}{\chi} + \mathcal{A}_\tau\bigip{t_{i,j};u}{\chi},
\]
for all $\chi$ in $\mathcal{V}_h^p(t_{i,j})$ with $i=1,\ldots,m$ and $j=1,\ldots,p$.
Let $\psi\in\mathcal{V}_h^p$ and define $\phi \equiv u_h - \psi$ in $\mathcal{V}_h^p$ and $\eta \equiv u - \psi$.
Re-write the statement of the Galerkin orthogonality as
\begin{equation}        \label{theorem galerkin}
        \ip{ \partial_\tau\phi(t_{i,j})}{\chi}  +       \mathcal{A}_\tau\bigip{t_{i,j};\phi}{\chi}      =       \ip{\partial_\tau\eta(t_{i,j})}{\chi} + \mathcal{A}_\tau\bigip{t_{i,j};\eta}{\chi},
\end{equation}
for all $\chi$ in $\mathcal{V}_h^p(t_{i,j})$.
Using equation \eref{theorem galerkin} we will show
\[
        \enorm{\phi}    \le     C \enorm{\eta}
\]
and use the triangle inequality to obtain the sought after bound \eref{a priori stmfem}.

Fix $i$ and $j$ and choose $\chi = \phi(t_{i,j})$ so that \eref{theorem galerkin} gives
\begin{equation}        \label{coll ortho}
        \ip{\phi_{\tau}}{\phi}  +       \mathcal{A}_\tau\ip{\phi}{\phi} =       \ip{\eta_{\tau}}{\phi}  +       \mathcal{A}_\tau\ip{\eta}{\phi},
\end{equation}
at time $t=t_{i,j}$.
The bound at this collocation node comes from bounding the terms in \eref{coll ortho} individually.
For the first term on the left, the shift lemma gives the bound
\begin{equation}        \label{characteristic identity}
        \ip{\phi_{\tau}}{\phi}  =       \frac{1}{2} \partial_\tau\norm{\phi}_0^2        
                                        \ge     \frac12 \frac{d}{dt} \norm{\tilde{\phi}}_0^2
                                                -       C_{\mu,d,p} \max_{1 \le k \le p} \norm{\phi(t_{i,k})}_0^2,
\end{equation}
where $\tilde{\phi}$ is the shift onto the initial mesh of the time partition.
Since the mesh motion satisfies \eref{characteristic regularity},
\begin{equation}
        \mathcal{A}_\tau\ip{\phi}{\phi} 
                \ge \bar{a} \snorm{\phi}_1^2 -\kappa \snorm{\phi}_1 \norm{\phi}_0 + \bar{c} \norm{\phi}_0^2
                \ge     C_{\mathcal{A}} \norm{\phi}_1^2 - C_{\mathcal{A},\kappa} \norm{\phi}_0^2.
\end{equation}
Now, choose $\varepsilon > 0$ to be sufficiently small and bound
\begin{equation}
        \ip{\eta_{\tau}}{\phi}  \le     C \norm{\eta_{\tau}}_{(-1,\mathcal{V}_h^p(t_{i,j}))}^2 + \varepsilon \norm{\phi}_1^2
\end{equation}
and, since \eref{characteristic regularity} holds,
\begin{equation}        \label{right energy form bound}
        \mathcal{A}_\tau\ip{\eta}{\phi}                 \le     C_{\mathcal{A},\kappa} \norm{\eta}_1^2 + \varepsilon \norm{\phi}_1^2.
\end{equation}
Hence, from \eref{coll ortho}--\eref{right energy form bound}, it is true at $t=t_{i,j}$ that
\begin{equation}        \label{coll bound}
        \frac{1}{2}\frac{d}{dt}\norm{\tilde{\phi}}_0^2  +       C_{\mathcal{A}} \norm{\phi}_1^2 
                \le     C_{\mathcal{A},\kappa,\mu,d,p} \left\{ \norm{\eta_{\tau}}_{(-1,\mathcal{V}_h^p(t_{i,j}))}^2 + \norm{\eta}_1^2 + \max_{1 \le k \le p}\norm{\phi(t_{i,k})}_0^2 \right\}.
\end{equation}

Using the local quadrature rule $\mathcal{Q}_i$, with positive weights, to aggregate \eref{coll bound} over the time partition, we recover
\begin{equation}        \label{agg coll bound}
        \mathcal{Q}_i\Big(\frac{d}{dt}\norm{\tilde{\phi}}_0^2\Big)      +       C_{\mathcal{A}} \mathcal{Q}_i\big(\norm{\phi}_1^2       \big)
                \le     C_{\mathcal{A},\kappa,\mu,d,p} \left\{ \mathcal{Q}_i\big(\norm{\eta_{\tau}}_{(-1,\mathcal{V}_h^p(t_{i,j}))}^2 + \norm{\eta}_1^2\big)
                                                                         + \max_{1 \le k \le p}\norm{\phi(t_{i,k})}_0^2 \right\}.
\end{equation}
The collocation nodes satisfy \eref{quad_rule},
which implies that
\begin{equation}        \label{quad time derivative}
        \mathcal{Q}_i\Big(\frac{d}{dt}\norm{\tilde{\phi}}_0^2\Big)      \ge     \frac{1}{\Delta t_i} \int_{t_{i-1}}^{t_i} \frac{d}{dt}\norm{\tilde{\phi}}_0^2\ dt
                                                                                                =       \frac{\norm{\tilde{\phi}(t_{i^-})}_0^2-\norm{\tilde{\phi}(t_{i-1^+})}_0^2}{\Delta t_i}.
\end{equation}
From bound \eref{quad time derivative}, the shift lemma, and the local Gr\"onwall inequality, we may write
\begin{multline}        \label{pre gronwall bound}
        \frac{\norm{{\phi}(t_{i^-})}_0^2-\norm{{\phi}(t_{i-1^+})}_0^2}{\Delta t_i}      
                +       C_{\mathcal{A}} \mathcal{Q}_i\big(\norm{\phi}_1^2       \big)
        \\      \le     C_{\mathcal{A},\kappa,\mu,d,p} \left\{ \mathcal{Q}_i\big(\norm{\eta_{\tau}}_{(-1,\mathcal{V}_h^p(t_{i,j}))}^2 + \norm{\eta}_1^2\big) 
                                +       \norm{{\phi}(t_{i^-})}_0^2+\norm{{\phi}(t_{i-1^+})}_0^2 \right\}.
\end{multline}
We now use the discrete Gr\"onwall inequality to show
\begin{equation}        \label{partition gronwall}
        \max_{0 \le i \le m} \norm{\phi(t_{i^-})}_0^2 + \sum_{i=1}^m \Delta t_i \mathcal{Q}_i( \norm{\phi}_1^2 )        \le     C \left\{ \norm{\phi(0)}_0^2 + \enorm{\eta}^2 \right\}.
\end{equation}
Since an $\mathcal{L}_2$-projection is used for the initial condition,
\begin{equation}
        \norm{\phi(0)}_0        \le \norm{\eta(0)}_0    \le \enorm{\eta}.
\end{equation}
Furthermore, at each collocation node, $t_{i,j}$,
\[
        \ip{\partial_\tau\phi}{\chi}    =       \ip{\partial_\tau\eta}{\chi} + \mathcal{A}_\tau\ip{\eta}{\chi} - \mathcal{A}_\tau\ip{\phi}{\chi}
\]
for all $\chi$ in $\mathcal{V}_h^p(t_{i,j})$, which implies that
\begin{equation}        \label{negative norm bound}
        \norm{\partial_\tau\phi}_{(-1,\mathcal{V}_h^p(t_{i,j}))}
                \le     C_{\mathcal{A},\kappa} \left\{ \norm{\partial_\tau\eta}_{(-1,\mathcal{V}_h^p(t_{i,j}))} + \norm{\eta}_1 + \norm{\phi}_1 \right\}.
\end{equation}
From \eref{partition gronwall}--\eref{negative norm bound},
\[
                \max_{0 \le i \le m} \norm{\phi(t_{i^-})}_0^2 + \sum_{i=1}^m \Delta t_i \mathcal{Q}\big( \norm{\phi}_1^2 + \norm{\phi}_{(-1,\mathcal{V}_h^p(\cdot))}^2 \big)
                                        \le     C \enorm{\eta}^2.
\]
All that is needed to conclude the proof is the local Gr\"onwall inequality once more to show 
\begin{equation}
        \max_{1\le j \le p} \norm{\phi(t_{i,j})}_0^2    \le C \left\{ \norm{ \phi(t_{i-1^+})}_0^2 + 
                                                                                        \Delta t_i \mathcal{Q}_i \left( \norm{\eta_{\tau}}_{(-1,\mathcal{V}_h^p(\cdot))}^2 + \norm{\eta}_1^2 + \norm{\phi}_1^2 \right) \right\}
                                                                        \le C \enorm{\eta}^2
\end{equation}
for $i=1,\ldots,m$.
Thus, we have $\enorm{\phi} \le C \enorm{\eta}$, as desired.
\end{proof}

\section{Discussion}\label{sec5}

In the case $p=1$, Theorem \ref{main_theorem} improves on the results given in \cite{BANKSANTOS} in several
small but significant ways. First, the energy norm $\enorm{\cdot}$ employs the characteristic space-time derivative 
rather than the time derivative, an improvement first used in \cite{LIU_ETAL}. Theorem \ref{main_theorem}
also unifies proofs for Crank-Nicolson and first backward difference approaches under one umbrella, and
extends the theory to the family of first order $\theta$-methods.  
For the case $p>1$, we believe our result to be new.

The family of methods covered by Theorem \ref{main_theorem} corresponds to a family of $p$-stage fully 
implicit Runge-Kutta methods. Except for the case $p=1$, these methods are rarely used in practice.
Being fully implicit, all degrees of freedom from all stages become coupled, resulting in a system of
equations of order $Np$ to be solved, where $N$ is the number of degrees of freedom in the space dimensions.
When $p>1$, one normally moves to diagonally implicit Runge-Kutta methods, where degrees of freedom
associated with stage $r$ depend only on degrees of freedom from stages $0,1,\dots,r-1$. This gives the
system of equations a block triangular shape, yielding $p$ similar systems of order $N$ to be solved in each time step,
rather than one system of order $Np$. Indeed, the example that inspired this work is the TR-BDF2 method
\cite{A30,A30a,STRANG,SHAMPINE}, which in this context is a second order two stage diagonally implicit
Runge-Kutta method. More informally, it consists of a first half-step using the trapezoid rule (Crank-Nicolson)
followed by a half-step using the second backward difference formula. Unfortunately, thus far we have been
unable to prove  a symmetric error estimate like Theorem \ref{main_theorem} for the general case
of $p$-stage diagonally implicit
Runge-Kutta methods. The main issue is that we have not been able to cast such methods in a fully
Galerkin finite element framework. 
We have been successful in analyzing the TR-BDF2 method \cite{PAPER2,METTITHESIS}, but were able to
achieve only a partially symmetric error estimate; in particular, some extra time-truncation terms appear on the
right hand side of the analogue of \eref{a priori stmfem}. This is reminiscent of similar terms that
appeared in the analysis of time discretization schemes in \cite{DUPONT82}.

\bibliography{ref}
\bibliographystyle{siam}
\end{document}
